
\documentclass{amsart}

\usepackage{amsmath,amssymb,amsthm,amscd}

\usepackage{color}
\def\red#1{{\textcolor{red}{#1}}} 
\def\blue#1{{\textcolor{blue}{#1}}} 
\definecolor{magenta}{rgb}{1,0,1}
\def\magenta#1{{\textcolor{magenta}{#1}}} 

\usepackage{pinlabel}

\hyphenation{mani-fold mani-folds sub-mani-fold sub-mani-folds topo-logy
Topo-logy geo-metry Geo-metry mono-dromy}

\newtheorem{prop}{Proposition}[section]
\newtheorem{thm}[prop]{Theorem}
\newtheorem{lem}[prop]{Lemma}

\theoremstyle{definition}

\newtheorem{rem}[prop]{Remark}

\newtheorem*{ack}{Acknowledgements}


\def\co{\colon\thinspace}

\newcommand{\bfn}{\mathbf{n}}
\newcommand{\new}{\mathrm{new}}

\newcommand{\RP}{\mathbb{R}\mathrm{P}}

\newcommand{\R}{\mathbb{R}}

\newcommand{\bfu}{\mathbf{u}}

\newcommand{\bfv}{\mathbf{v}}

\newcommand{\bfw}{\mathbf{w}}

\newcommand{\Z}{\mathbb{Z}}

\DeclareMathOperator{\Hom}{Hom}
\DeclareMathOperator{\Int}{Int}
\DeclareMathOperator{\sign}{sign}


\begin{document}

\author[S.~Durst]{Sebastian Durst}
\author[H.~Geiges]{Hansj\"org Geiges}
\address{Mathematisches Institut, Universit\"at zu K\"oln,
Weyertal 86--90, 50931 K\"oln, Germany}
\email{sdurst@math.uni-koeln.de}
\email{geiges@math.uni-koeln.de}

\author[M.~Kegel]{Marc Kegel}
\address{Institut f\"ur Mathematik, Humboldt-Universit\"at zu Berlin,
Unter den Linden 6, 10099 Berlin, Germany}
\email{kegemarc@math.hu-berlin.de}

\title{Handle homology of manifolds}

\date{}

\begin{abstract}
We give an entirely geometric proof, without recourse to
cellular homology, of the fact that $\partial^2=0$ in
the chain complex defined by a handle decomposition
of a given manifold. Topological invariance of the
resulting `handle homology' is a consequence of Cerf theory.
\end{abstract}

\keywords{manifold, handle decomposition, homology theory}

\subjclass[2010]{55N35, 57R65}

\maketitle


\section{Introduction}
It is well known that any handle decomposition of a given
smooth manifold gives rise to a cell complex of the same homotopy
type. This allows one to compute the singular homology of the manifold
as the cellular homology of that associated complex.
In this situation, the boundary operator in the cellular chain complex
can be interpreted as a boundary operator on handles,
defined in terms of intersection numbers between
the attaching spheres of $k$-handles and the belt spheres of
$(k-1)$-handles.

In the present note we take this geometric interpretation of the
boundary operator $\partial$ as our starting point, and we prove
$\partial^2=0$ by geometric means, rather than by relating $\partial$
to the boundary operator on the cellular chain complex.
As a benefit, the resulting `handle homology' no longer relies,
neither explicitly nor implicitly, on singular homology theory.
The topological invariance of the `handle homology' thus constructed
is proved by appealing to Cerf theory~\cite{cerf70},
according to which any two handle decompositions of a given manifold
are related by some simple moves, cf.~\cite[Theorem~4.2.12]{gost99}:
handle slides and the creation or annihilation
of cancelling handle pairs.

The definition of the homology of a manifold in terms of a
handle decomposition gives a very simple proof of Poincar\'e duality;
our reasoning puts this proof on a purely geometric
footing. Apart from the geometric proof of $\partial^2=0$,
which we have not found in the literature, this note
is largely expository, expanding on some aspects
of~\cite[Section~4.2]{gost99}. It is instructive to
compare our arguments for dealing with
sign issues in the proof of $\partial^2=0$
with the discussion of orientations in
Morse homology~\cite{schw93,webe06}.
\section{Handle decompositions}
We assume that the reader is familiar with the basics of
handle decompositions of manifolds at the level of
\cite[Sections 4.1]{gost99}; see also
\cite{geig17} for an elementary introduction.
Here we only recall the parts of this theory necessary to set up
notation.
\subsection{Handles}
An $n$-dimensional $k$-handle is a copy of $h_k:= D^k\times D^{n-k}$,
attached along its \emph{lower boundary} $\partial_-h_k:=\partial D^k\times
D^{n-k}$ to the boundary of a smooth $n$-dimensional manifold
$X$ by an embedding
\[ \varphi\co\partial_-h_k\longrightarrow\partial X,\]
see Figure~\ref{figure:k-handle}.
The number $k\in \{0,\ldots, n\}$ is called the \emph{index} of the handle.

\begin{figure}[h]
\labellist
\small\hair 2pt
\pinlabel $h_k$ at 193 201
\pinlabel ${\partial X}$ at 634 204
\pinlabel $X$ at 550 38
\pinlabel ${\partial_-h_k=\partial D^k\times D^{n-k}}$ [t] at 360 30
\pinlabel ${\partial_+h_k=D^k\times\partial D^{n-k}}$ [l] at 553 329
\pinlabel ${\{0\}\times\partial D^{n-k}}$ [r] at 211 325
\pinlabel ${D^k\times\{0\}}$ [l] at 600 285
\endlabellist
\centering
\includegraphics[scale=.4]{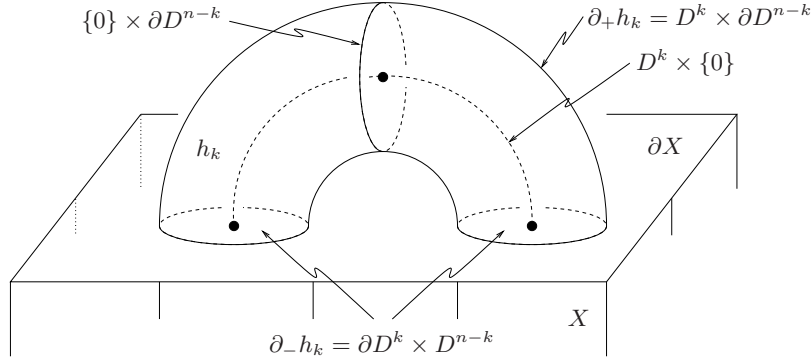}
  \caption{Attaching a $k$-handle.}
  \label{figure:k-handle}
\end{figure}

After smoothing the \emph{corner}
$\varphi(\partial D^k\times\partial D^{n-k})$,
the resulting space $X\cup_{\varphi}h_k$ is a smooth manifold. Its boundary
is given by removing $\varphi(\partial_-h_k)$ from $\partial X$
and replacing it with the \emph{upper boundary} $\partial_+h_k:=
D^k\times\partial D^{n-k}$.

We write
\[ A_k:=\partial D^k\times \{0\}\equiv\varphi(\partial D^k\times\{0\})
\cong S^{k-1} \]
for the \emph{attaching sphere} of the $k$-handle $h_k$, and
\[ B_k:=\{0\}\times\partial D^{n-k}\cong S^{n-k-1} \]
for its \emph{belt sphere}.

We shall usually identify
$A_k$ and $\partial_-h_k$ with their respective images in $\partial X$
under the embedding~$\varphi$.
\subsection{Nice handle decompositions}
\label{subsection:nice}
Let $M$ be a smooth compact $n$-manifold with boundary
$\partial M=\partial_-M\sqcup\partial_+M$, where either collection
$\partial_{\pm}M$ of boundary components may be empty.
If $M$ is oriented, the boundaries are oriented such that
$\partial M=\overline{\partial_-M}\sqcup\partial_+M$.
A \emph{handle decomposition} of $M$ relative to $\partial_-M$
is an identification of $M$ with a manifold obtained by
successively attaching handles to $[0,1]\times\partial_-M$
along $\{1\}\times\partial_-M$, where $\{0\}\times\partial_-M$
is identified with $\partial_-M$. Attaching a $0$-handle amounts to the
disjoint union with an $n$-disc $D^n=\{0\}\times D^n$.
Morse theory implies that such a handle decomposition
always exists.

If we attach a $k$-handle $h_k$ to an $n$-manifold $X$ with boundary,
followed by an $\ell$-handle
$h_{\ell}$ with $\ell\leq k$, the attaching sphere
$A_{\ell}$ in $\partial (X\cup h_k)$ can be made disjoint
from the belt sphere $B_k$ by an isotopy, since
\[ \dim A_{\ell}+\dim B_k=(\ell-1)+(n-k-1)<n-1=\dim\partial(X\cup h_k).\]
This allows one to push $\partial_-h_{\ell}$ away from
$\partial_+h_k$ by an isotopy that flows radially outward
(in the $D^k$-factor) on $\partial_+h_k$.
It follows that handles may always be attached in the order of
increasing index, and this will be assumed from now on.
We write $M_k$ for the manifold obtained from
$[0,1]\times\partial_-M$ by attaching handles up to and including index~$k$.
We also assume without loss of generality that the lower boundaries of
the $k$-handles are (disjointly) embedded in $\partial M_{k-1}\setminus
\partial_-M$, rather than some lower boundaries intersecting the
upper boundaries of other $k$-handles.

Now consider the attaching of a $k$-handle $h_k$, followed
by a $(k+1)$-handle $h_{k+1}$. Since
\[ \dim A_{k+1}+\dim B_k=k+(n-k-1)=n-1,\]
we may assume after an isotopy that $A_{k+1}$ and $B_k$
intersect each other transversely in finitely many points.
The embedding of the lower boundary $\partial_-h_{k+1}=\partial D^{k+1}
\times D^{n-k-1}$ defines a tubular neighbourhood of~$A_{k+1}$, and we can
take this to intersect $B_k$ in finitely many copies of
$\{*\}\times D^{n-k-1}$, with $*\in\partial D^{k+1}$.

By flowing out radially from $B_k$ on the upper boundary $\partial_+h_k$
we may further assume that $A_{k+1}$ intersects $\partial_+h_k$
in finitely many copies of $D^k\times\{*\}$, with $*\in\partial D^{n-k}$.

Finally, we consider the intersection of the attaching sphere
$A_{k+1}$ with the belt sphere $B_{k-1}$ of any $(k-1)$-handle
in $\partial M_k$. The dimensions of $A_{k+1}$ and $B_{k-1}$
add up to~$n$, so after making the intersection transverse,
it will be a $1$-dimensional manifold with boundary; the boundary points
lie in the corners of the $k$-handles.

These assumptions on the handle decomposition
being sufficiently `nice', illustrated in Figure~\ref{figure:A-sphere},
will be taken for granted from now on. The superscripts
$\mu$ and $\nu$ are used to label the $k$- and $(k-1)$-handles,
respectively.  Beware that, due to lack of dimensions, this figure is a
little misleading. The intersection $A_{k+1}\cap B_{k-1}^{\nu}$
is indeed $1$-dimensional, but the intersection $A_{k+1}\cap
\partial_+h_k^{\mu}$ is $k$-dimensional. Also, because of $k=1$
in the figure, the belt sphere $B_{k-1}^{\nu}$ coincides with
a component of $\partial M_{k-1}$,

\begin{figure}[h]
\labellist
\small\hair 2pt
\pinlabel $h_k^{\mu}$ [br] at 253 315
\pinlabel $A_{k+1}\cap\partial_+h_k^{\mu}$ [bl] at 536 315
\pinlabel $M_{k-1}$ at 550 38
\pinlabel $A_{k+1}\cap{B_{k-1}^{\nu}}$ [b] at 82 284
\pinlabel $b$ [tl] at 348 311
\pinlabel $a$ [r] at 212 127
\pinlabel $c$ [t] at 171 109
\endlabellist
\centering
\includegraphics[scale=.46]{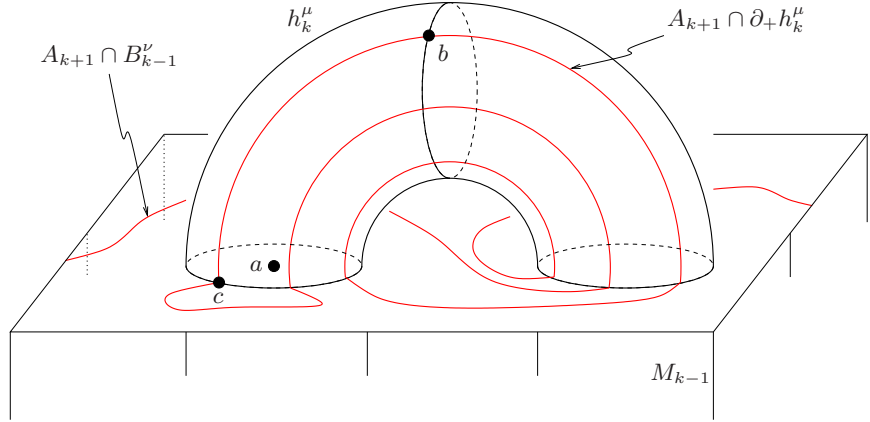}
  \caption{An attaching sphere $A_{k+1}$.}
  \label{figure:A-sphere}
\end{figure}

Let $c$ be a boundary point of the $1$-dimensional manifold
$A_{k+1}\cap B_{k-1}^{\nu}$. This point lies in the corner
$\partial D^k\times\partial D^{n-k}$ of a $k$-handle
$h_k^{\mu}$, and we write it as $c=(c_k,c_{n-k})$ with respect to
this product structure. Since our handle decomposition is nice,
we have
\[ \{c_k\}\times D^{n-k}\subset B_{k-1}^{\nu}\cap\partial_-h_k^{\mu}\]
and
\[ D^k\times\{c_{n-k}\}\subset A_{k+1}\cap\partial_+h_k^{\mu}.\]
This determines a pair of points $(a,b)$ with
\[ a=(c_k,0)\in B_{k-1}^{\nu}\cap A_k^{\mu}\]
and
\[ b=(0,c_{n-k})\in A_{k+1}\cap B_k^{\mu}.\] 
Conversely, any pair of points
\[ (a,b)\in (B_{k-1}^{\nu}\cap A_k^{\mu})\times (A_{k+1}\cap B_k^{\mu})\]
determines a unique point $c$ in the boundary
of $A_{k+1}\cap B_{k-1}^{\nu}$. So there is a one-to-one
correspondence between pairs $(a,b)$ and points $c\in\partial(A_{k+1}\cap
B_{k-1}^{\nu})$.

In Morse homology, each pair $(a,b)$ (or the respective point~$c$)
corresponds to a broken trajectory of the negative gradient flow,
connecting two critical points of index difference~$2$, broken
at a critical point of intermediate index.
\subsection{Orientations}
\label{subsection:orientations}
(1) If the manifold $M$ is oriented, we orient the $k$-handles
in the relative handle decomposition of $(M,\partial_-M)$
as follows. Choose any identification of the core disc of
$h_k$ with $D^k\times\{0\}$, equipped with
its standard orientation as unit disc in~$\R^k$
(see Figure~\ref{figure:k-handle}). Then identify the belt disc
of $h_k$ with $\{0\}\times D^{n-k}$ (and hence $h_k$ with
$D^k\times D^{n-k}$) in such a way that the
orientation of $h_k$ induced by the orientation of $M$ coincides
with the standard orientation of $D^k\times D^{n-k}$ as
subset of~$\R^n$.

The attaching sphere $A_k$ is oriented as the boundary of the
core disc $D^k\times\{0\}$ according to the dictum `outward normal first';
the belt sphere $B_k$, as the boundary of the belt disc $\{0\}\times D^{n-k}$.

The successive handlebodies $M_k$ are oriented as equidimensional
submanifolds of~$M$; the hypersurface $\partial M_k$ is oriented as
the boundary of~$M_k$.

Notice that for the $n$-handles the core disc coincides with
the full handle. In particular, here the orientation of
the core disc is determined by the ambient orientation,
and $A_n$ carries the opposite orientation of $\partial M_{n-1}$.

(2) If $M$ is non-orientable, we can still choose an orientation for
each core disc and give the attaching sphere the induced orientation.
This orientation of the $D^k$-factor in $h_k=D^k\times D^{n-k}$
defines a \emph{coorientation} on the belt sphere
$B_k=\{0\}\times\partial D^{n-k}$ as submanifold of
the upper boundary $\partial_+h_k$, i.e.\ an orientation of
the normal bundle of $B_k\subset\partial_+h_k$.
This, as we shall see, is sufficient
for defining the relevant intersection numbers over the integers.
\section{Handle homology}
We now use the handle decomposition of $M$ relative to
$\partial_-M$ to define relative homology groups $H_*(M,\partial_-M)$.
We first formulate everything under the assumption that $M$ is
oriented, where we can work with integral coefficients.
The arguments in Sections \ref{subsection:delta}
and~\ref{subsection:delsquared} likewise apply to non-orientable
manifolds if one works over~$\Z/2$. The necessary modifications to
define integral handle homology for non-orientable manifolds
will be explained in Section~\ref{subsection:integral}.
\subsection{Definition of the handle chain complex}
\label{subsection:delta}
Let $C_k(M,\partial_-M)$, be the free abelian group generated by the oriented
$k$-handles in a handle decomposition of~$M$ relative to $\partial_-M$. Of
course, $C_k(M,\partial_-M)$ depends on the choice of handle
decomposition, but we suppress this from the notation.

The boundary operator is then defined by
\[ \begin{array}{rccl}
\partial_k\co & C_k (M,\partial_-M) & \longrightarrow
    & C_{k-1}(M,\partial_-M)\\
              & h_k                 & \longmapsto    
    & (-1)^{k-1}\sum_{\nu} (A_k\bullet B_{k-1}^{\nu}) h_{k-1}^{\nu}.
\end{array}\]
Here $A_k\bullet B_{k-1}^{\nu}$ is the intersection number
of $A_k$ and $B_{k-1}^{\nu}$ in the oriented manifold~$\partial M_{k-1}$.

The sign $(-1)^{k-1}$ in this definition is chosen for consistency with
the boundary operator in cellular homology. For a point
$*\in\partial D^{n-k+1}$, the intersection number
of $D^{k-1}\times\{*\}\equiv D^{k-1}$ with the belt sphere
$\{0\}\times\partial D^{n-k+1}\equiv\partial D^{n-k+1}$ in
$\partial h_{k-1}$ equals
\[ (D^{k-1}\times\{*\})\bullet \partial D^{n-k+1}=(-1)^{k-1},\]
since the orientation of $D^{k-1}$ followed
by the orientation of $\partial D^{n-k+1}$ equals $(-1)^{k-1}$ times
the boundary orientation of $\partial_+h_{k-1}$.

To see this formally, we introduce the notation $[...]$
for orientations determined by submanifolds or frame fields
whose dimensions add up to the dimension of the ambient manifold.
For instance, in $h_{k-1}$ we have $[D^{k-1},D^{n-k+1}]=1$.
Writing $\bfn$ for the outer normal to
$\partial_+h_{k-1}=D^{k-1}\times\partial D^{n-k+1}$,
we have $[\bfn,\partial_+h_{k-1}]=1$.

The intersection number in question is then computed as
\begin{eqnarray*}
D^{k-1}\bullet\partial D^{n-k+1} & = & [\bfn,D^{k-1},\partial D^{n-k+1}]\\
 & = & (-1)^{k-1}[D^{k-1},\bfn,\partial D^{n-k+1}]\\
 & = & (-1)^{k-1}[D^{k-1},D^{n-k+1}]\;=\;(-1)^{k-1}.
\end{eqnarray*}

\begin{rem}
In \cite[p.~111]{gost99}, the boundary operator $\partial_k$
is defined without the factor $(-1)^k$. This does not change the
homology of the chain complex.
\end{rem}
\subsection{Proof of $\partial^2=0$}
\label{subsection:delsquared}
Applying the boundary operator twice, we find
\[ \partial_k(\partial_{k+1}h_{k+1})=-\sum_{\mu,\nu}
(A_{k+1}\bullet B_k^{\mu})(A_k^{\mu}\bullet B_{k-1}^{\nu})h_{k-1}^{\nu}.\]
We therefore need to show that
\[ \sum_{\mu}(A_{k+1}\bullet B_k^{\mu})(A_k^{\mu}\bullet B_{k-1}^{\nu})=0.\]
In the notation of Section~\ref{subsection:nice}, this amounts to
\[ \sum\sign(a)\sign(b)=0,\]
where the sum is over all intersection points
$a\in A_k^{\mu}\cap B_{k-1}^{\nu}$ and $b\in A_{k+1}\cap B_k^{\mu}$,
and $\sign\in\{\pm1\}$ denotes the sign
induced by the orientations of the relevant sub\-manifolds intersecting
transversely in the given point.

Now, each pair of points $(a,b)$ corresponds to
a boundary point $c$ of the compact $1$-dimensional
manifold $A_{k+1}\cap B_{k-1}^{\nu}$. This already proves $\partial^2=0$
over~$\Z_2$. Over the integers, our aim is to show that
$A_{k+1}\cap B_{k-1}^{\nu}$ can be oriented in such a way that
any boundary point $c$, with $\sign(c)$ denoting the boundary
orientation, satisfies
\[ \sign(c)=\sign(a)\sign(b).\]
Then the sum $\sum\sign(a)\sign(b)$ amounts to a sum
over the signed boundary points~$c$, and hence equals zero.

Consider a component of the $1$-manifold $C:=A_{k+1}\cap B_{k-1}$
(we suppress the superscript $\nu$ from now on) with a boundary
point~$c$. This point lies on the corner $S^{k-1}\times S^{n-k-1}$
of a $k$-handle~$h_k$. This corner is a separating hypersurface
in $\partial M_{k-1}$, with the lower boundary $\partial_-h_k$
to one side of it; see Figure~\ref{figure:corner}.


\begin{figure}[h]
\labellist
\small\hair 2pt
\pinlabel $S^{n-k-1}$ [br] at 499 443
\pinlabel $S^{k-1}$ [bl] at 247 338
\pinlabel $c$ [bl] at 441 262
\pinlabel $\mbox{\magenta{$A_{k+1}\cap{B_{k-1}}$}}$ [tl] at 238 153
\pinlabel $\bfu$ [tl] at 374 180
\pinlabel $\bfv$ [t] at 330 264 
\pinlabel $\bfw$ [r] at 426 348
\pinlabel $\mbox{\red{$A_{k+1}\cap\partial{M_{k-1}}$}}$ [tl] at 100 217
\pinlabel $\mbox{\blue{$\partial_-h_k\cap{B_{k-1}}$}}$ [r] at 691 415
\endlabellist
\centering
\includegraphics[scale=.4]{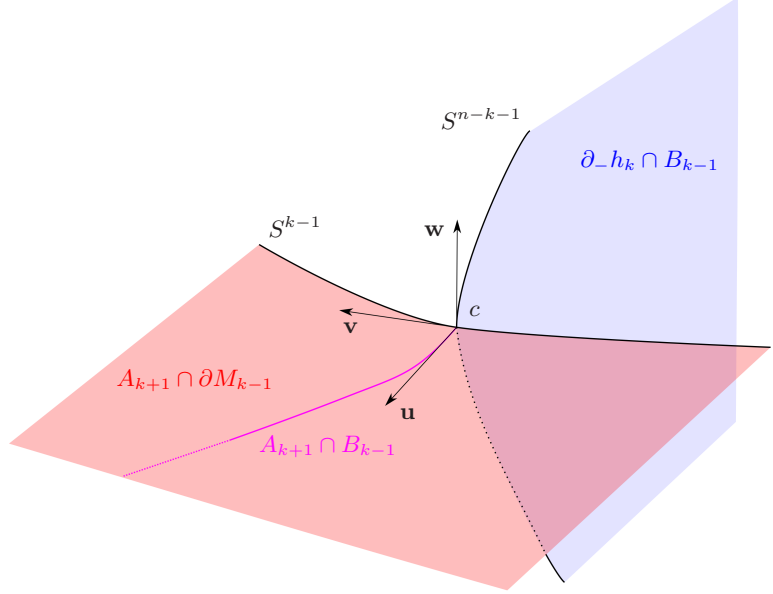}
  \caption{Orienting $A_{k+1}\cap B_{k-1}$.}
  \label{figure:corner}
\end{figure}

We now orient the $1$-manifold $C=A_{k+1}\cap B_{k-1}$ by a tangent
vector~$\bfu$ such that transverse frames $\bfv=(v_2,\ldots,v_k)$ in
$A_{k+1}$ and $\bfw=(w_2,\ldots,w_{n-k})$ in $B_{k-1}$ can be chosen
subject to the following orientation conventions:
\begin{itemize}
\item[(i)] $\bfv,\bfu$ is a positive frame for $A_{k+1}$ along~$C$;
\item[(ii)] $\bfu,\bfw$ is a negative frame for $B_{k-1}$ along~$C$;
\item[(iii)] $\bfv,\bfu,\bfw$ is a negative frame for
$\partial M_{k-1}$ along~$C$.
\end{itemize}
If $1<k<n-1$, in which case both $\bfv$ and $\bfw$ contain at least
one vector, these conventions indeed determine the orientation $[\bfu]$ of~$C$.
Simply choose a tangent vector $\bfu$ along $C$, and extend to frames
$\bfv,\bfu$ and $\bfu,\bfw$ subject to conditions (i) and~(ii).
If (iii) is satisfied by $\bfv,\bfu,\bfw$, this $[\bfu]$ is the orientation
we take for~$C$; if not, replace $\bfu$ by $-\bfu$, and one vector each
in $\bfv$ and $\bfw$ by its negative.

If one or both of $\bfv,\bfw$ are empty frames, i.e.\ if $k=1$ or $k=n-1$,
conditions (i) to (iii) are consistent: $A_n$ has the opposite
orientation of $\partial M_{n-2}$; the belt sphere $B_0$ is
a component of $\partial M_0$.

The point $c$ is then oriented as a boundary point of~$C$.
Figure~\ref{figure:corner} shows the case $\sign(c)=-1$.

\begin{lem}
\label{lem:sign}
$\sign(c)=\sign(a)\sign(b)$.
\end{lem}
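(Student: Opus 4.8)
The plan is to work locally near the boundary point $c$, using the product structure of the $k$-handle $h_k$ to identify all the intersections that meet there. Near $c=(c_k,c_{n-k})$ we have a chart on $\partial M_{k-1}$ modelled on the corner $\partial D^k\times\partial D^{n-k}$ (after smoothing, on a neighbourhood in $\partial_+h_k$-coordinates that collapses $\partial D^k\times\partial D^{n-k}$), and in these coordinates the niceness assumptions of Section~\ref{subsection:nice} tell us exactly what $A_{k+1}$, $B_{k-1}$, $\partial_-h_k$, $A_k^{\mu}$ and $B_k^{\mu}$ look like: $A_{k+1}$ contains a copy of $D^k\times\{c_{n-k}\}$, $B_{k-1}$ contains a copy of $\{c_k\}\times D^{n-k}$, $\partial_-h_k=\partial D^k\times D^{n-k}$, and the two spheres $A_k^{\mu},B_k^{\mu}$ are the cores $\partial D^k\times\{0\}$ and $\{0\}\times\partial D^{n-k}$ respectively. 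The intersection $C=A_{k+1}\cap B_{k-1}$ near $c$ is thus $\{c_k\}\times\{c_{n-k}\}$-fibre times an interval going into $D^k\times D^{n-k}$, i.e.\ it runs transverse to the corner, with $c$ sitting on the boundary; the other endpoint-direction of that interval points toward the interior of $\partial_-h_k$.

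First I would set up a single oriented local frame at $c$ adapted to the product $D^k\times D^{n-k}$. Write $\bfn$ for the outward normal to $\partial_+h_k$ (equivalently the inward-pointing direction along $C$), so that $[\bfn,\partial_+h_k]=1$ as in Section~\ref{subsection:delta}. Split $\R^k=\R\langle\bfu\rangle\oplus\R^{k-1}$ with $\R^{k-1}$ carrying the frame $\bfv$, and $\R^{n-k}=\R\langle \bfn\rangle\oplus\R^{n-k-1}$ with $\R^{n-k-1}$ carrying $\bfw$, where $\bfu,\bfv,\bfw$ are chosen as in conditions (i)--(iii) preceding the lemma. Then I would express each of the four local intersection numbers in terms of the bracket $[\,\cdot\,]$: the sign of $a$ is the intersection sign of $A_k^{\mu}$ with $B_{k-1}$ inside $\partial M_{k-1}$, which in these coordinates reads off the relative orientation of $(\text{frame of }A_k)$, $(\text{frame of }B_{k-1})$ against $\partial M_{k-1}$; similarly for $b$; and $\sign(c)$ is read from $[\bfn,\ldots]$ restricted to the relevant slice. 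Condition (iii), $[\bfv,\bfu,\bfw]=-1$ for $\partial M_{k-1}$ along $C$, is the master identity from which everything else follows by permuting $\bfn,\bfu,\bfv,\bfw$ and keeping track of the Koszul signs; conditions (i) and (ii) fix how $\bfu$ interacts with $\bfv$ and with $\bfw$, and the orientation conventions of Section~\ref{subsection:orientations} fix how the core-disc orientations propagate to $A_k^{\mu}$, $B_k^{\mu}$, $B_{k-1}$ and to the boundary orientation of $M_{k-1}$.

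The actual computation is then a bookkeeping of these bracket identities: write $\sign(a)$ and $\sign(b)$ as signed ratios of brackets of frames, multiply, cancel the factors involving $\bfv$ and $\bfw$ using (i)--(iii), and compare with the bracket expression for $\sign(c)$ as a boundary point of $C$ (which is $+1$ precisely when $\bfu$ points out of $C$ at $c$, i.e.\ when $-\bfn$ is the outward direction). I expect the signs $(-1)^{k-1}$ and $(-1)^{n-k-1}$ appearing from moving $\bfn$ or $\bfu$ past $\bfv$ or $\bfw$ to cancel against each other and against the conventions in Section~\ref{subsection:orientations}(1), leaving exactly $\sign(c)=\sign(a)\sign(b)$; I would also separately check the degenerate cases $k=1$ and $k=n-1$, where $\bfv$ or $\bfw$ is empty, using the remarks that $A_n$ carries the opposite orientation to $\partial M_{n-1}$ and that $B_0$ is a component of $\partial M_0$. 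The main obstacle is purely the sign bookkeeping: one must be scrupulous about (a) whether $\bfn$ denotes the outward normal of $\partial_+h_k$ or of $M_{k-1}$, (b) the ``outward normal first'' convention for $A_k$ versus the boundary-disc convention for $B_k$, and (c) the fact — flagged in Section~\ref{subsection:nice} — that the picture in Figure~\ref{figure:corner} is dimensionally misleading, so one cannot read signs off the picture and must trust the frame algebra instead. Getting a consistent global choice of orientation on $C$ (not just locally near one $c$) is automatic once (i)--(iii) are shown to determine $[\bfu]$, which is already argued in the text for $1<k<n-1$.
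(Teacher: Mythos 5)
Your setup is essentially the paper's own: frames $\bfu,\bfv,\bfw$ subject to conventions (i)--(iii), the outer normal $\bfn$, and bracket bookkeeping that compares the handle orientation of $h_k=D^k\times D^{n-k}$ with the ambient orientation. But the proposal stops exactly where the proof has to begin: the decisive step is the sign computation itself, and you defer it with ``I expect the signs \dots to cancel \dots leaving exactly $\sign(c)=\sign(a)\sign(b)$''. The content of the lemma \emph{is} that cancellation; a plan plus an expectation of the outcome is not a proof. Concretely, what is missing are the two intermediate orientation identities that make the bookkeeping work: (1) the part $D^k\times\{*\}$ of $A_{k+1}$ in $\partial_+h_k$ is oriented by $-\sign(b)\sign(c)[\bfn,\bfv]$, which requires both the relation $[\bfv,\bfn]=\sign(c)[\bfv,\bfu]$ and the computation $(D^k\times\{*\})\bullet B_k=(-1)^k$ from Section~\ref{subsection:delta}; and (2) the $D^{n-k}$-factor of $h_k$ is oriented by $\sign(a)[\bfu,\bfw]$, which requires noticing that $\sign(a)$ is defined via the orientation of $\partial M_{k-1}$, whereas the intersection $\partial D^k\bullet(\{*\}\times D^{n-k})=1$ takes place in $\partial_-h_k$ with the boundary orientation of $h_k$, so a sign $-\sign(a)$ appears. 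This last flip is nowhere in your outline, and it is precisely the kind of place where the ``bookkeeping'' goes wrong if done casually. Only after (1) and (2) can one assemble $-\sign(a)\sign(b)\sign(c)[\bfn,\bfv,\bfu,\bfw]$ as the orientation of $h_k$ and compare with $-[\bfn,\bfv,\bfu,\bfw]$ for $M_{k-1}$ via (iii).

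There are also setup inaccuracies that would derail the computation if carried forward. The vector $\bfn$ is the outer normal to $\partial M_{k-1}$, which (for a vertically attached handle) is \emph{tangent} to $\partial_+h_k$ at $c$ and lies in the inward radial $D^k$-direction; it is not ``the outward normal to $\partial_+h_k$'', it is not tangent to $C$ (so not an ``inward-pointing direction along $C$''), and it does not sit in the $\R^{n-k}$-factor as your splitting asserts. Likewise, near $c$ the curve $C=A_{k+1}\cap B_{k-1}$ runs from the corner into $\partial M_{k-1}\setminus\partial_-h_k$, not ``into $D^k\times D^{n-k}$'' or ``toward the interior of $\partial_-h_k$''; inside the handle boundary one only sees the discs $D^k\times\{c_{n-k}\}\subset A_{k+1}$ and $\{c_k\}\times D^{n-k}\subset B_{k-1}$, which meet at the single corner point $c$. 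Fixing these identifications and then actually carrying out the bracket manipulation along the lines of (1) and (2) above would turn your outline into the paper's proof; as it stands, the argument has a genuine gap.
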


\begin{proof}
For simplicity of notation, assume that the intersections $A_k\cap B_{k-1}$
and $A_{k+1}\cap B_k$ consist of single points
\[ A_k\cap B_{k-1}=\{a\},\;\;\; A_{k+1}\cap B_k=\{b\}.\]
Let $c\in\partial C$ be the corresponding boundary point of~$C$.

First we are going to relate the sign of $b$ to the orientation of the
$D^k$-factor in the $k$-handle~$h_k$.
Write $\bfn$ for the outer normal to $\partial M_{k-1}$. We may
assume that $h_k$ is attached vertically to
$M_{k-1}$, so that $\bfn$ may be thought of as a tangent vector to
$\partial_+h_k$ at~$c$; see Figure~\ref{figure:sign}, which shows the
situation for $\sign(c)=1$.

According to our conventions, $[\bfv,\bfu]$ is the positive orientation
of $A_{k+1}$ along~$C$. The transverse frame $\bfv$ can be chosen such
that at $c$ it is tangent to the $S^{k-1}$-factor of the corner
of~$h_k$, see Figure~\ref{figure:corner}. Then $[\bfv,\bfn]$
is likewise a frame of $A_{k+1}$ at~$c$, when we consider the
part of $A_{k+1}$ lying in~$\partial_+h_k$. The orientations
defined by these two frames are related by a factor $\sign(c)$.

\begin{figure}[h]
\labellist
\small\hair 2pt
\pinlabel $M_{k-1}$ at 111 37
\pinlabel $A_{k+1}\cap{B_{k-1}}$ [bl] at 43 128
\pinlabel $\bfu$ [t] at 247 121
\pinlabel $\bfv$ [tr] at 180 152
\pinlabel $\bfv$ [tr] at 270 152
\pinlabel $\bfn$ [l] at 289 172
\pinlabel $h_k$ [bl] at 365 113
\pinlabel $A_{k+1}$ [bl] at 264 238
\pinlabel $A_{k+1}$ [bl] at 120 96
\pinlabel $c$ [tr] at 289 125
\endlabellist
\centering
\includegraphics[scale=.6]{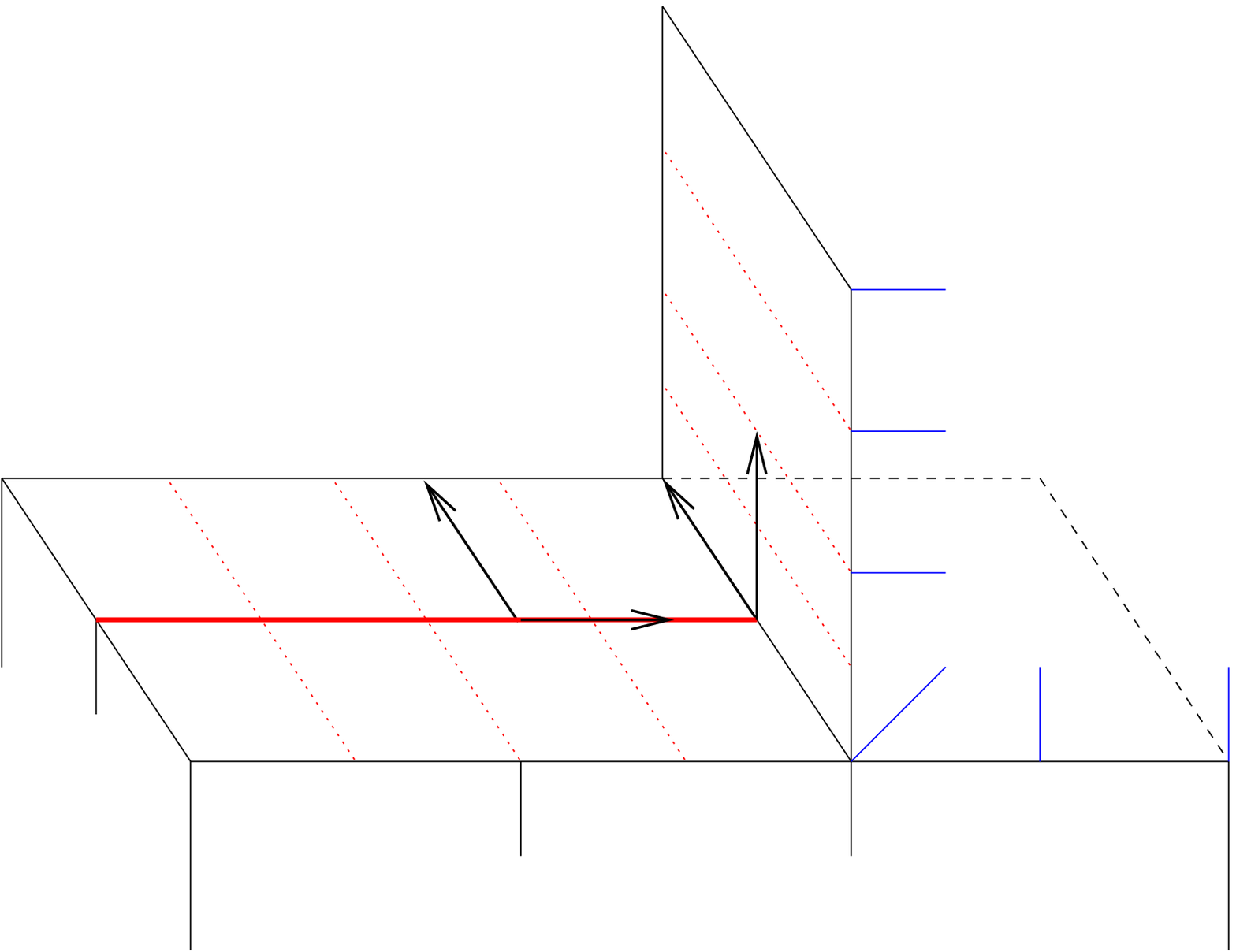}
  \caption{$\bfu$, $\bfn$ and $\sign(c)$.}
  \label{figure:sign}
\end{figure}

The part of $A_{k+1}$ lying in $\partial_+h_k$ is a $k$-disc
$D^k\times\{*\}$, with $*\in\partial D^{n-k}$, passing through the
point $b=A_{k+1}\cap B_k$ on the belt disc of~$h_k$.
By definition, we have $\sign(b)=A_{k+1}\bullet B_k$.
From the observation in Section~\ref{subsection:delta}
that $(D^k\times\{*\})\bullet B_k=(-1)^k$, we then deduce that
the positive orientation of $D^k\times\{*\}$ is given by
\[ (-1)^k\sign(b)\sign(c)[\bfv,\bfn]=-\sign(b)\sign(c)[\bfn,\bfv].\]

Next we relate the sign of $a$ to the orientation of the
$D^{n-k}$-factor in~$h_k$.
The part of $B_{k-1}$ lying in $\partial_-h_k$ is an $(n-k)$-disc
$\{*\}\times D^{n-k}$, with $*\in\partial D^k$,
passing through the point $a=A_k\cap B_{k-1}$.
By definition, we have
$\sign(a)=A_k\bullet B_{k-1}$, where the orientation of
the intersection point is determined with respect to the ambient
orientation of $\partial M_{k-1}$. Thus, regarded as an
intersection in $\partial_-h_k$, with respect to the
orientation as boundary of~$h_k$, the sign of the intersection
point is $-\sign(a)$.

Now, we have $\partial D_k\bullet(\{*\}\times D^{n-k})=1$
in $\partial_-h_k$. Recall that our convention (ii) says
that $B_{k-1}$ is oriented by $-[\bfu,\bfw]$, and $A_k$ is oriented
as~$\partial D^k$. It follows that the $D^{n-k}$-factor of $h_k$
is oriented by
\[ \sign(a)[\bfu,\bfw].\]

Assembling this information, we find that the orientation of $h_k$
is given by
\[ -\sign(a)\sign(b)\sign(c)[\bfn,\bfv,\bfu,\bfw].\]
On the other hand, the orientation of $M_{k-1}$, by~(iii), is
given by $-[\bfn,\bfv,\bfu,\bfw]$. We conclude
$\sign(a)\sign(b)\sign(c)=1$.
\end{proof}
\subsection{Integral homology of non-orientable manifolds}
\label{subsection:integral}
We now show that the orientation convention from
Section~\ref{subsection:orientations}~(2) suffices to define
integral homology over the integers, even if the manifold is
not orientable.

Let $A,B$ be submanifolds of a manifold $N$ with a transverse intersection
$A\cap B$. The manifold $N$ is not assumed to be orientable.
If $A$ is oriented and $B$ is cooriented (i.e.\ the normal bundle
of $B$ in $N$ is oriented), the submanifold $A\cap B$ inherits an
orientation, since the normal bundle of $A\cap B$ in $A$ can be
identified with the normal bundle of $B$ in $N$, restricted
to $A\cap B\subset B$. The intersection $A\cap B$
is oriented by this rule: the coorientation of $B$, followed by
the positive orientation of $A\cap B$,
defines the orientation of~$A$.

This means that we can still make sense of the intersection numbers
$A_k\bullet B_{k-1}$ of the submanifolds $A_k, B_{k-1}\subset
\partial M_{k-1}$. So we may define the boundary operator
$\partial_k$ as in Section~\ref{subsection:delta}. (With the
natural coorientation of the $B_{k-1}^{\nu}$, the factor $(-1)^{k-1}$
in the definition of $\partial_k$ can be removed; see Remark~\ref{rem:sign}.)

Likewise, the $1$-manifold $C=A_{k+1}\cap B_{k-1}$ inherits
an orientation~$[\bfu]$.
If $k=1$ (when $\dim A_{k+1}=1$), we simply take $\bfu$ as a vector
field defining the orientation of~$A_2$. If $k>1$ we choose $\bfu$
by the following rule. As before, $\bfv=(v_2,\ldots,v_k)$ denotes a
transverse frame along $C$ in~$A_{k+1}$, which we now interpret as a coframe
for $B_{k-1}$ in $\partial M_{k-1}$ along~$C$.
\begin{itemize}
\item[(i)] $\bfv$ is a positive coframe for $B_{k-1}$ along~$C$;
\item[(ii)] $\bfv,\bfu$ is a positive frame for $A_{k+1}$ along~$C$.
\end{itemize}

We can now prove Lemma~\ref{lem:sign} in this setting (up to an irrelevant
sign, see Remark~\ref{rem:sign}).
As before, we assume for notational simplicity that we are dealing
with a pair $(a,b)$ of single intersection points corresponding to
a boundary point $c$ of~$C$. The signs of these three points are given by
\begin{eqnarray*}
\sign(a)      & = & A_k\bullet B_{k-1},\\
\sign(b)      & = & A_{k+1}\bullet B_k,\\
{[\bfv,\bfn]} & = & \sign(c)[\bfv,\bfu].
\end{eqnarray*}
Here the first two equations hold by definition; for the third the
argument is as in the proof of Lemma~\ref{lem:sign}.

The coorientation of $B_k$ in $\partial M_k$ is given by
the orientation of $D^k\times\{*\}$ in the boundary of the $k$-handle~$h_k$.
Since the outer normal along the boundary of that disc is $-\bfn$
(see Figure~\ref{figure:sign}), this coorientation is $[-\bfn, A_k]$.
On the other hand, by (ii) and the definition of $\sign(b)$, this
coorientation is also given by $\sign(b)[\bfv,\bfu]$, hence
\[ [\bfv,\bfu]=\sign(b)[-\bfn, A_k].\]
Similarly, by (i) and the definition of $\sign(a)$, we have
\[ [A_k]=\sign(a)[\bfv].\]
We then compute
\begin{eqnarray*}
[\bfv,\bfu] & = & \sign(b)[-\bfn, A_k]\\
            & = & -\sign(a)\sign(b)[\bfn,\bfv]\\
            & = & (-1)^k\sign(a)\sign(b)[\bfv,\bfn]\\
            & = & (-1)^k\sign(a)\sign(b)\sign(c)[\bfv,\bfu],
\end{eqnarray*}
which proves that $\sign(c)=(-1)^k\sign(a)\sign(b)$. Hence
$\sum\sign(a)\sign(b)=0$ also in the non-orientable setting.

\begin{rem}
\label{rem:sign}
If the belt sphere $\{0\}\times \partial D^{n-k}$ in
$h_k=D^k\times D^{n-k}$ is cooriented by the orientation of~$D^k$,
as seems natural, then $D^k\bullet\partial D^{n-k}=1$, whereas in the
oriented setting we computed this intersection number as $(-1)^k$,
see Section~\ref{subsection:delta}. This accounts
for the extra sign in the above computation.
\end{rem}
\section{Topological invariance}
We define the `handle homology' $H_*(M,\partial_-M)$ as the
homology of the chain complex $C_*(M,\partial_-M)$.
By construction, it is isomorphic to the cellular homology
of the cell complex associated with the handle decomposition,
and hence a homotopy invariant.
It has already been observed in \cite[Section~4.2]{gost99} that,
alternatively, the following theorem of Cerf~\cite{cerf70} may be used to
show that this handle homology does not depend on the choice of relative
handle decomposition.

\begin{thm}[Cerf]
\label{thm:Cerf}
Any two relative handle decompositions of a compact manifold pair
$(M,\partial_-M)$, both ordered by increasing index,
are related by a finite sequence of handle slides and the
creation or annihilation of cancelling handle pairs.
\end{thm}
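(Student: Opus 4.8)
The plan is to deduce the theorem from Cerf's analysis of one-parameter families of smooth functions. Recall that a relative handle decomposition of $(M,\partial_-M)$ ordered by increasing index is encoded---up to handle slides and ambient isotopy---by a Morse function $f\co M\to[0,1]$ with $f^{-1}(0)=\partial_-M$, $f^{-1}(1)=\partial_+M$, no critical points on the boundary, whose critical values are ordered by the index of the critical points, together with a gradient-like vector field~$\xi$. So the first step is to fix, for each of the two given decompositions, such a pair $(f_i,\xi_i)$, $i=0,1$, and to aim for a path connecting $(f_0,\xi_0)$ to $(f_1,\xi_1)$ through pairs that still define handle decompositions, with only finitely many ``bifurcations'' along the way, each realising one of the permitted moves.

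Before varying~$f$ I would first dispose of the dependence on the auxiliary vector field. For a fixed self-indexing Morse function, any two gradient-like vector fields are joined by a path that is generic except at finitely many parameters, at each of which a single isolated gradient trajectory between two critical points of equal index is born or dies. Such an elementary change of~$\xi$ is exactly a handle slide of one $k$-handle over another (the attaching sphere of one handle is pushed over a belt disc of the other and re-emerges band-summed with a parallel copy of the latter's attaching sphere), composed with an ambient isotopy of the attaching region. This is the content of the standard independence-of-trajectories lemma, and from now on the handle decomposition may be treated as a function of~$f$ alone, modulo handle slides and isotopy.

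The main step is then to join $f_0$ to $f_1$ by a generic path $(f_t)_{t\in[0,1]}$ of functions with the prescribed behaviour near~$\partial M$, and to invoke Cerf's theorem: such a path can be chosen to be Morse for all but finitely many~$t$; at each exceptional value exactly one critical point is a birth--death (cubic) singularity; and the critical values sweep out a Cerf graphic with only finitely many transverse crossings of branches. A birth--death at a parameter $t_0$ is precisely the creation or annihilation of a cancelling pair of handles of indices $k$ and~$k+1$. Using the rearrangement argument---at the cost of composing with further handle slides---one can keep the critical points ordered by index throughout the path, so that the crossings in the Cerf graphic occur only between critical points of the same index; by the previous paragraph, applied to the gradient-like vector field that varies along with $f_t$, each such crossing realises a handle slide followed by an isotopy. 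Concatenating these finitely many elementary moves proves the theorem.

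The principal obstacle is Cerf's theorem itself: the assertion that, inside the relevant space of functions, the complement of the stratum of Morse functions is, generically along a path, a codimension-one set met only in isolated birth--death points, with the higher-codimension strata irrelevant. This is a deep transversality statement in an infinite-dimensional function space and is what we are really appealing to. A secondary difficulty is the bookkeeping that turns each function-space bifurcation into the precise handle-theoretic move---tracking framings when a change of gradient-like vector field is identified with a handle slide, and checking that the two endpoints are genuinely recovered as index-ordered decompositions without smuggling in extra moves.
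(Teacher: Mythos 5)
Your sketch takes essentially the same route as the paper, which does not prove this statement but quotes it from Cerf~\cite{cerf70} (cf.~\cite[Theorem~4.2.12]{gost99}) and records exactly the dictionary you describe: birth--death points of a generic one-parameter family of functions correspond to the creation or annihilation of cancelling handle pairs, and isolated connecting trajectories between critical points of equal index correspond to handle slides. Deferring the deep transversality statement in the function space to Cerf, and treating the rest as translation and rearrangement bookkeeping, is precisely the stance the paper itself adopts.
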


Cerf actually deals with homotopies of Morse functions.
Cancelling pairs of critical points in such a homotopy translate into
a cancelling handle pair; connecting trajectories between critical
points of equal index correspond to the intermediate stage
of a handle slide when the attaching sphere of a $k$-handle
passes through the belt sphere of another $k$-handle.

As sketched in \cite[p.~112]{gost99}, these moves do not affect
the handle homology. This means that $H_*(M,\partial_-M)$
is a diffeomorphism invariant. For completeness, we provide a few details.
\subsection{Handle slides}
Given two $k$-handles $h_k$ and $h_k'$ attached to the
boundary of an $n$-manifold~$X$, a handle slide of $h_k$
over $h_k'$ is defined by isotoping the attaching sphere
$A_k$ of $h_k$ over a $k$-disc $D^k\times\{*\}$ in the
upper boundary $\partial_+h_k'$ of~$h_k'$, as sketched
in Figure~\ref{figure:slide0}. Depending on the relative
orientations of $A_k$ and $\partial(D^{k}\times\{*\})$,
this amounts to replacing $A_k$ by the connected sum
$A_k\#\bigl(\pm\partial (D^k\times\{*\})\bigr)$, which in $\partial X$ is
isotopic to $A_k^{\new}:=A_k\#(\pm A_k')$. For the $k$-handle
$h_k^{\new}$ after the handle slide this implies
\[ \partial_k h_k^{\new}= \partial_k(h_k\pm h_k').\]

\begin{figure}[h]
\labellist
\small\hair 2pt
\endlabellist
\centering
\includegraphics[scale=.5]{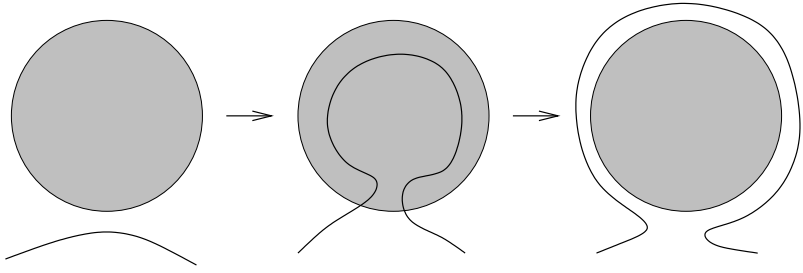}
  \caption{Sliding $A_k$ over $D^{k}\times\{*\}$.}
  \label{figure:slide0}
\end{figure}

In order to understand the effect of such a handle slide on the
handle homology, we need to take into account that the
$(k+1)$-handles that intersect the upper boundary $\partial_+h_k$ of
$h_k$ will be transformed by the handle slide.
A very simple example is shown in Figures \ref{figure:slide1}
and~\ref{figure:slide2}. With respect to the orientations of
the $1$-handles shown in the figures, we can interpret
$h_1^{\mathrm{new}}$ as the sum of $h_1$ and $h_1'$. Indeed, we have
$\partial_1h_1^{\new}=\partial_1(h_1+h_1')$.

\begin{figure}[h]
\labellist
\small\hair 2pt
\pinlabel $h_1'$ at 199 50
\pinlabel $h_1$ at 36 203
\pinlabel $\mbox{\red{$A_2$}}$ [bl] at 49 131
\pinlabel $\mbox{\blue{$A_2'$}}$ [br] at 372 98
\endlabellist
\centering
\includegraphics[scale=.6]{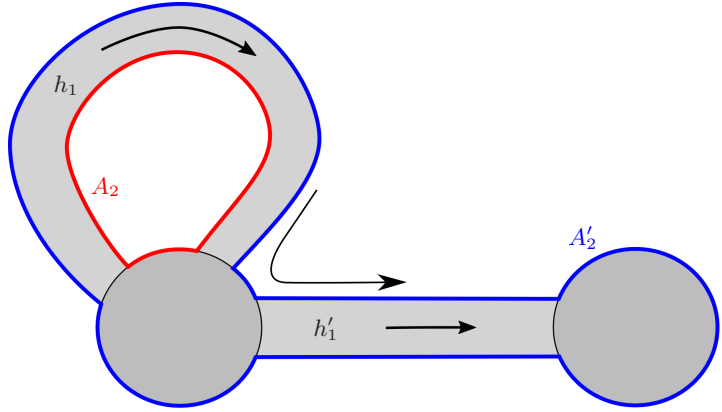}
  \caption{Before the handle slide.}
  \label{figure:slide1}
\end{figure}

The $2$-handle $h_2$ with attaching circle
$A_2$ shown in Figure~\ref{figure:slide1}
before the handle slide, with the standard orientation
of~$\R^2$, satisfies $\partial_2 h_2=-h_1$. After the handle slide
we have $\partial_2 h_2^{\new}=h_1'-h_1^{\new}$. Notice that we can also
write $\partial_2 h_2=h_1'-(h_1+h_1')$. In other words,
the boundary operator before the handle slide has exactly the same form
as the boundary operator after the handle slide, when in the former
we replace the basis element $h_1$ by $h_1+h_1'$. Similar
relations hold for the $2$-handle with attaching circle~$A_2'$.

\begin{figure}[h]
\labellist
\small\hair 2pt
\pinlabel $h_1'$ at 143 48
\pinlabel $h_1^{\new}$ at 126 188
\pinlabel $\mbox{\red{$A_2$}}$ [tl] at 100 156
\pinlabel $\mbox{\blue{$A_2'$}}$ [bl] at 372 88
\endlabellist
\centering
\includegraphics[scale=.6]{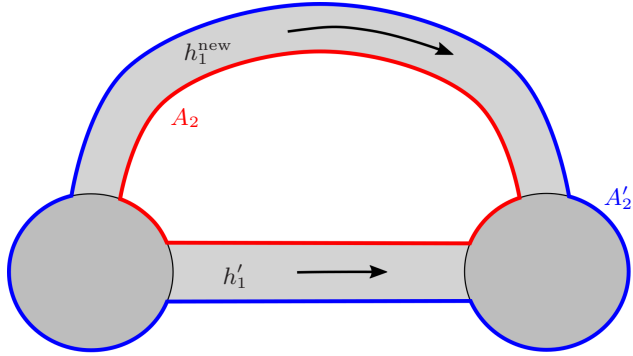}
  \caption{After the handle slide.}
  \label{figure:slide2}
\end{figure}

We now show that this is the general picture. Consider the attaching sphere
$A_{k+1}$ of a $(k+1)$-handle $h_{k+1}$. The part $A_{k+1}\cap
\partial_+h_k$ in the upper boundary of the sliding handle~$h_k$,
which is a collection of $m:=|A_{k+1}\cap B_k|$ copies
of~$D^k$, is simply moved along with~$h_k$. The part
$A_-:=A_{k+1}\setminus\Int(\partial_+h_k)$ of the attaching sphere
is a surface of genus zero with $m$ boundary circles.
The orientation of each of these circles, as boundary of~$A_-$,
is the \emph{opposite} of its orientation as the boundary of the
respective component $D^k$ of $A_{k+1}\cap\partial_+h_k$.
After the handle slide, we obtain the
boundary connected sum of $A_-$ with $m$ copies of $D^k\times\{*\}$
in $\partial_+h_k'$, where the orientation of $D^k\times\{*\}$
depends on the orientation of the corresponding boundary
component of~$A_-$. Our comment on the orientation
of $\partial A_-$ means that for the computation of the handle
homology, after the handle slide $h_k\rightsquigarrow h_k\pm h_k'$,
we may write
\[ A_-^{\new}=A_-\natural\bigl(\mp(-1)^k(A_{k+1}\bullet B_k)
(D^k\times\{*\})\bigr).\]
Notice the slight abuse of notation: we have $m=|A_{k+1}\cap B_k|$
disjoint copies of $D^k\times\{*\}\subset\partial_+h_k'$, but for
the computation of the boundary operator, only the signed count
$(-1)^k(A_{k+1}\bullet B_k)$ of these discs matters.

Since $(-1)^k(D^k\times\{*\})\bullet B_k'=1$, the boundary
$\partial_{k+1}h_{k+1}^{\new}$ is given by
\[ \partial_{k+1}h_{k+1}^{\new}=(-1)^k(A_{k+1}\bullet B_k)(h_k^{\new}\mp h_k')
+(-1)^k (A_{k+1}\bullet B_k')h_k'+\ldots\]
On the other hand, we have
\begin{eqnarray*}
\partial_{k+1}h_{k+1}
  & = & (-1)^k(A_{k+1}\bullet B_k)h_k+
        (-1)^k(A_{k+1}\bullet B_k')h_k'+\ldots\\
  & = & (-1)^k(A_{k+1}\bullet B_k)\bigl((h_k\pm h_k')\mp h_k'\bigr)+
        (-1)^k(A_{k+1}\bullet B_k')h_k'+\ldots
\end{eqnarray*}
Thus, as in our simple example, the new chain complex is given by
replacing the basis element $h_k$ with $h_k\pm h_k'$ in the old one.

One further comment is in order. In the case $k=n-1$, the
upper boundary $\partial_+h_k'=\partial_+h_{n-1}'$ is disconnected. If
$D^{n-1}\times\{*\}\subset\partial_+h_{n-1}'$ is a part of~$A_-$,
the slide of $h_{n-1}$ over $h_{n-1}'$, which formally would
lead to a boundary connected sum of $A_-$ with $D^{n-1}\times\{*\}$
with the reversed orientation, amounts to a subtraction of
$D^{n-1}\times\{*\}$ from~$A_-$. This is illustrated by
the handle with attaching circle $A_2'$ in Figures \ref{figure:slide1}
and~\ref{figure:slide2}.
\subsection{Cancelling handle pairs}
A cancelling pair of a $(k-1)$- and a $k$-handle, attached to the
boundary of an $n$-manifold $X$, can be described as follows,
see Figure~\ref{figure:cancelling}.

\begin{figure}[h]
\labellist
\small\hair 2pt
\pinlabel $D^k_0$ at 216 130
\pinlabel $D^{k-1}_-$ [t] at 139 22
\pinlabel $D^{k-1}_+$ [l] at 364 244
\pinlabel $D^1$ [l] at 216 237
\pinlabel $-1$ [t] at 216 216
\pinlabel $+1$ [b] at 216 253
\pinlabel ${{}\times D^{n-k}}$ [l] at 379 183
\pinlabel $X$ at 395 32
\endlabellist
\centering
\includegraphics[scale=.6]{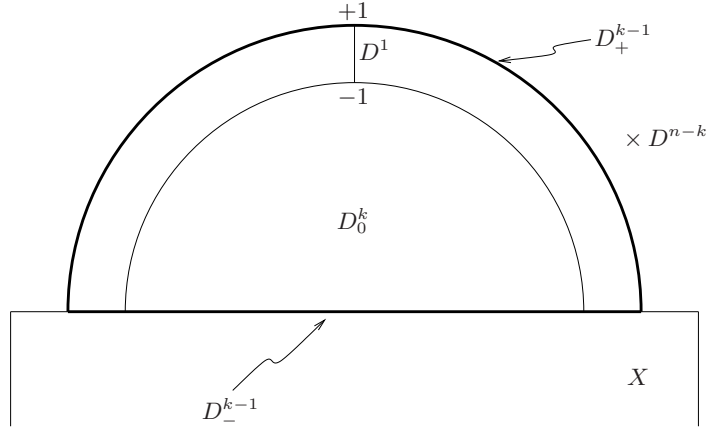}
  \caption{A cancelling handle pair.}
  \label{figure:cancelling}
\end{figure}

Write the boundary of the $k$-disc $D^k$ as the union of two discs:
\[ \partial D^k=D^{k-1}_+\cup D^{k-1}_-.\]
The boundary connected sum of $X$ with an $n$-ball,
which we think of as $D^k\times D^{n-k}$, along
$D^{k-1}_-\times D^{n-k}$
does not change~$X$ up to diffeomorphism, that is,
\[ X\cong X\cup_{D^{k-1}_-\times D^{n-k}} (D^k\times D^{n-k}).\]
We now want to interpret this boundary connected sum
as the successive attaching of a $(k-1)$- and a $k$-handle.
To this end, think of $D^k$ as the union of a smaller disc $D_0^k$,
whose boundary intersects $D^{k-1}_-$ in a slightly smaller disc,
and a rim $D^{k-1}_+\times D^1$, where $D^{k-1}_+\times\{1\}$
is identified with $D^{k-1}_+$, and $D^{k-1}_+\times\{-1\}$
with $\partial D_0^k\setminus D^{k-1}_-$:
\[ D^k=D^k_0\cup_{D^{k-1}_+\times\{-1\}} (D^{k-1}_+\times D^1).\]
Then 
\begin{eqnarray*}
X\cup_{D^{k-1}_-\times D^{n-k}} (D^k\times D^{n-k})
 & \cong & \bigl(X\cup_{\partial D^{k-1}_+\times D^1\times D^{n-k}}
             (D^{k-1}_+\times D^1\times D^{n-k})\bigr)\\
 &       & \mbox{}\cup_{\partial D^k_0\times D^{n-k}}(D^k_0\times D^{n-k}).
\end{eqnarray*}
Observe that the attaching sphere $\partial D^k_0\times\{0\}$
of the $k$-handle intersects the belt sphere $\{0\}\times
\partial(D^1\times D^{n-k})$ of the $(k-1)$-handle in a single point
\[ (0,-1,0)\in D^{k-1}_+\times D^1\times D^{n-k}.\]
Thus, if the boundary connected sum with the ball is done away from the
other handles, this interpretation as the introduction of
a cancelling handle pair does not change the handle homology.

Conversely, if a $(k-1)$-handle and a $k$-handle have been attached
such that $A_k$ intersects $B_{k-1}$ transversely in a single point, provided
the attaching is done `nicely' in the sense of Section~\ref{subsection:nice},
one can identify the handle attachment with this standard model
and hence remove this pair of handles.

We now want to describe the effect on the handle homology of
removing a cancelling handle pair $(h_k^{\mu_0},h_{k-1}^{\nu_0})$.
Here we need to take into account that the attaching sphere
$A_k$ may also pass over $(k-1)$-handles
other than~$h_{k-1}^{\nu_0}$.
Homotopically, this removal amounts to shrinking the core disc
$D_0^k\times\{0\}$ of $h_k^{\mu_0}$ to a point, so in the new chain
complex we simply set $h_k^{\mu_0}=0$. Since $A_k^{\mu_0}\bullet
B_{k-1}^{\nu_0}=\pm 1$, in the old chain complex we have
\[ \partial_k h_k^{\mu_0}=\pm h_{k-1}^{\nu_0} +(-1)^{k-1}\sum_{\nu\neq\nu_0}
(A_k^{\mu_0}\bullet B_{k-1}^{\nu})h_{k-1}^{\nu}.\]
The handle $h_{k-1}^{\nu_0}$ is removed from the chain complex by setting
the right-hand side of this equation equal to zero. Indeed, the
diffeomorphism $X\cong X\cup h_{k-1}^{\nu_0}\cup h_k^{\mu_0}$
amounts homotopically to isotoping the core disc
\[ D^{k-1}_+\equiv D^{k-1}_+\times\{(0,0)\}\subset
D^{k-1}_+\times D^1\times D^{n-k}\]
of $h_{k-1}^{\nu_0}$ rel boundary to $D^{k-1}_-$, where, as part
of the attaching sphere of~$h_k^{\mu_0}$, it represents
$\mp(-1)^{k-1}\sum_{\nu\neq\nu_0}(A_k^{\mu_0}\bullet
B_{k-1}^{\nu})h_{k-1}^{\nu}$ in the chain complex.

How does this removal of the cancelling handle pair affect the
other $k$-handles? Any $h_k^{\mu}$, $\mu\neq\mu_0$, may be assumed
to intersect $\partial_+h_{k-1}^{\nu_0}$ in discs of the form
\[ D^{k-1}_+\times\{(1,*)\}\subset
D^{k-1}_+\times D^1\times D^{n-k}.\]
Again, the diffeomorphism $X\cong X\cup h_{k-1}^{\nu_0}\cup h_k^{\mu_0}$
amounts homotopically to replacing each of these discs by~$D^{k-1}_-$.
In other words, in the boundary $\partial_k h_k^{\mu}$ for
$\mu\neq\mu_0$, we likewise have to replace $h_{k-1}^{\nu_0}$
by $\mp (-1)^{k-1}\sum_{\nu\neq\nu_0}
(A_k^{\mu_0}\bullet B_{k-1}^{\nu})h_{k-1}^{\nu}$ in the new chain complex.

We are now ready to show that this leaves the handle homology unchanged.
Write $C_*$ for the chain complex before, and $C_*'$ for the handle
complex after removing the cancelling handle pair. Let
\[ m_{\mu}:=(-1)^{k-1}(A_k^{\mu}\bullet B_{k-1}^{\nu_0})\]
be the coefficient of $h_{k-1}^{\nu_0}$ in the expansion
of~$\partial_k h_k^{\mu}$. As basis for $C_k$ we may use
\[ \bigl\{\text{$h_k^{\mu}\mp m_{\mu}h_k^{\mu_0}$ for $\mu\neq\mu_0$};\,
h_k^{\mu_0}\bigr\};\]
as basis for $C_{k-1}$ we choose
\[ \Bigl\{\text{$h_{k-1}^{\nu}$ for $\nu\neq\nu_0$};\,\pm h_{k-1}^{\nu_0}
+(-1)^{k-1}\sum_{\nu\neq\nu_0}
(A_k^{\mu_0}\bullet B_{k-1}^{\nu})h_{k-1}^{\nu}\Bigr\}.\]
The quotient groups $C_k'$ and $C_{k-1}'$ of $C_k$ and
$C_{k-1}$ under the new relations can be identified
with the subgroups given by removing the last vector in either basis.
Then the boundary operator $\partial_k\co C_k\rightarrow C_{k-1}$
splits as
\[ \partial_k=\partial_k'\oplus\mathrm{id}\co C_k'\oplus\Z\longrightarrow
C_{k-1}'\oplus\Z,\]
which shows that $H_*'=H_*$.
\subsection{Orientations}
In the case where $M$ is orientable, we made a
choice of orientations of $M$ and the core discs of the handles
to define the boundary operator over the integers,
but the resulting homology is independent of this choice.

Changing the orientation of $M$, while
keeping the orientations of the core discs, will change the
orientation of all belt spheres. This will leave the intersection
numbers $A_k\bullet B_{k-1}$ unchanged, because they are now
computed with respect to the opposite ambient orientation.

Changing the orientation of the core disc of a handle $h_k$,
while keeping the ambient orientation, will also change
the orientation of its belt sphere. So this change
amounts to replacing $h_k$ by $-h_k$ as a generator of~$C_k$.

When $M$ is not orientable, the argument why the choice of
(co-)orientations for the $A_k$ and $B_k$, respectively, does
not affect the resulting integral homology is analogous.
\subsection{Euler characteristic}
The Euler characteristic $\chi$ of a handle decomposition can be defined
as the alternating sum over the number of $k$-handles in
the decomposition. The topological invariance of $\chi$
is an immediate consequence of Theorem~\ref{thm:Cerf}.
This is a more direct argument for the invariance of $\chi$
than the usual algebraic reasoning~\cite[p.~146]{hatc02},
based on showing that $\chi$ equals the alternating sum over the ranks of
the homology groups.
\section{Poincar\'e duality}
\label{section:Poincare}
A relative handle decomposition of $(M,\partial_-M)$
can be read `upside down' as a relative handle decomposition
of $(M,\partial_+M)$. Any $k$-handle in the former becomes
an $(n-k)$-handle in the latter, with the roles of lower and upper
boundary, and that of attaching and belt sphere, reversed.

As has been observed before, see~\cite[p.~112]{gost99},
this yields a quick proof of Poincar\'e duality. We formulate it in the
oriented case for integral (co-)homology. For non-orientable manifolds it
remains true over~$\Z_2$.

\begin{thm}
Let $M$ be a compact, oriented $n$-manifold with boundary
$\partial M=\overline{\partial_-M}\sqcup\partial_+M$. Then
\[ H^k(M,\partial_-M)\cong H_{n-k}(M,\partial_+M).\]
\end{thm}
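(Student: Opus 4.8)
The plan is to exploit the ``upside-down'' reading of the handle decomposition, together with the already-constructed handle chain complex and the independence of handle homology on the choice of decomposition. Concretely, given a relative handle decomposition of $(M,\partial_-M)$ ordered by increasing index, we read it upside down to obtain a relative handle decomposition of $(M,\partial_+M)$ in which each $k$-handle becomes an $(n-k)$-handle, with attaching and belt spheres interchanged; since the reversed decomposition is ordered by \emph{decreasing} index in the original labelling, i.e.\ by increasing index in the new $(n-k)$ labelling, it is a legitimate handle decomposition and Theorem~\ref{thm:Cerf} guarantees that the resulting homology does not depend on which decomposition we started from.

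The heart of the argument is a chain-level identification. I would set $C_k:=C_k(M,\partial_-M)$ and let $C_{n-k}^{\mathrm{up}}:=C_{n-k}(M,\partial_+M)$ be the chain group of the upside-down decomposition; as free abelian groups both are generated by the same set of handles (a $k$-handle $h_k$ read upside down is the $(n-k)$-handle $\tilde h_{n-k}$), so there is a tautological isomorphism $C_k\xrightarrow{\ \cong\ }C_{n-k}^{\mathrm{up}}$, which we further compose with the ``dualising'' isomorphism $\Hom(C_k,\Z)\cong C_k$ sending the dual basis vector $h_k^{*}$ to $h_k$; this is where the cochain group $C^k(M,\partial_-M):=\Hom(C_k(M,\partial_-M),\Z)$ enters. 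The key computation is then to check that, under this identification, the coboundary $\delta^k\co C^k(M,\partial_-M)\to C^{k+1}(M,\partial_-M)$ — which is the transpose of $\partial_{k+1}$ — corresponds, up to an overall sign that is irrelevant for homology, to the boundary operator $\partial_{n-k}^{\mathrm{up}}\co C_{n-k}^{\mathrm{up}}\to C_{n-k-1}^{\mathrm{up}}$ of the upside-down complex. This reduces to comparing intersection numbers: the matrix entry of $\delta^k$ between $h_k^{\mu}$ and $h_{k+1}^{\lambda}$ is $(-1)^{k}(A_{k+1}^{\lambda}\bullet B_k^{\mu})$ computed in $\partial M_k$, whereas the matrix entry of $\partial_{n-k}^{\mathrm{up}}$ between the corresponding handles is the intersection number of the new attaching sphere (the old belt sphere $B_k^{\mu}$) with the new belt sphere (the old attaching sphere $A_{k+1}^{\lambda}$), computed in the same hypersurface but with the opposite co-orientation coming from reading ``top down''. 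So the two agree up to the predictable sign $(-1)^{\dim A_{k+1}\cdot\dim B_k}$ from commuting the two factors of a transverse intersection, plus the sign conventions already fixed in Sections~\ref{subsection:delta} and~\ref{subsection:orientations}.

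I would present the sign bookkeeping exactly as in the proof of Lemma~\ref{lem:sign}, using the bracket notation $[\,\cdots]$ for orientations of complementary frames: orient $\partial M_k$ once and for all, express $A_{k+1}^{\lambda}\bullet B_k^{\mu}$ as $[\,\text{normal data},A_{k+1}^{\lambda},B_k^{\mu}]$, and compare with $B_k^{\mu}\bullet A_{k+1}^{\lambda}=[\,\text{normal data},B_k^{\mu},A_{k+1}^{\lambda}]$; since the dimensions are $k$ and $n-k-1$, swapping the blocks contributes $(-1)^{k(n-k-1)}$. Combined with the $(-1)^{k}$ built into $\partial_{k+1}$ and the $(-1)^{n-k-1}$ built into $\partial_{n-k}^{\mathrm{up}}$, together with the orientation of $M_k$ versus $M_{n-k}^{\mathrm{up}}$ discussed in Section~\ref{subsection:orientations}(1) (recall $\partial M=\overline{\partial_-M}\sqcup\partial_+M$, so the $\partial_-M$-side and $\partial_+M$-side orientations differ by a sign on the relevant boundary components), one checks that the composite sign is independent of $\mu,\lambda$; call it $\epsilon_k\in\{\pm1\}$. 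Replacing the tautological isomorphism $C^k\cong C_{n-k}^{\mathrm{up}}$ by $\epsilon_k$ times itself turns the two differentials into each other exactly, yielding an isomorphism of chain complexes $C^{*}(M,\partial_-M)\cong C_{n-*}^{\mathrm{up}}$ and hence $H^k(M,\partial_-M)\cong H_{n-k}(M,\partial_+M)$ after passing to (co)homology. Finally, the right-hand side is the handle homology of \emph{some} relative handle decomposition of $(M,\partial_+M)$, and by the topological-invariance results of the previous section it equals $H_{n-k}(M,\partial_+M)$ computed from any decomposition.

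The main obstacle I expect is not conceptual but is the careful reconciliation of all the orientation conventions: the $(-1)^{k-1}$-type signs inserted ad hoc in $\partial_k$ ``for consistency with cellular homology'', the outward-normal-first convention for $A_k$ and $B_k$, the boundary orientation $\partial M=\overline{\partial_-M}\sqcup\partial_+M$ which is genuinely asymmetric in $\partial_-M$ and $\partial_+M$, and the fact that when $h_k$ is read upside down its new core disc is the old belt disc $\{0\}\times D^{n-k}$ whose orientation as a \emph{core} must be declared afresh. Making all of these fit so that the overall sign $\epsilon_k$ is well-defined and uniform in the handles — so that it can be absorbed into the identification and disappears from (co)homology — is the delicate part, and I would handle it by redoing the bracket computation of Lemma~\ref{lem:sign} in the ambient hypersurface $\partial M_k$ rather than inside a single handle, which is exactly the level at which both $\delta^k$ and $\partial^{\mathrm{up}}_{n-k}$ naturally live.
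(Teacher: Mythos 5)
Your proposal is correct and follows essentially the same route as the paper: read the handle decomposition upside down, identify the chain groups of the dual complex with the original ones, and observe that the upside-down boundary operator is (up to a sign depending only on $k$ and $n$, which is harmless for homology) the transpose of $\partial_k$, so the dual complex computes both $H_{n-*}(M,\partial_+M)$ and $H^*(M,\partial_-M)$. Your extra care in making the sign $\epsilon_k$ explicit and absorbing it into the identification is a fuller version of what the paper dispatches with the phrase ``up to sign, simply the transpose of $\partial_k$''.
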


\begin{proof}
Reading the handle decomposition of $(M,\partial_-M)$ upside down
gives a dual chain complex $C_*'(M,\partial_+M)$ and the
following commutative diagram:
\[ \begin{CD}
C_k(M,\partial_-M)       @>{\partial_k}>>         C_{k-1}(M,\partial_-M)\\
@|                                                @|                    \\
C_{n-k}'(M,\partial_+M)  @<{\partial_{n-k+1}'} << C_{n-k+1}'(M,\partial_+M).
\end{CD}\]
The lower line computes $H_{n-*}(M,\partial_+M)$.

Since, in the dual handle decomposition, the roles
of attaching and belt spheres is reversed, the boundary operator
$\partial_{n-k+1}'$ is, up to sign, simply the transpose of~$\partial_k$.
Therefore, the lower line may be read as
\[ \Hom(C_k(M,\partial_-M),\Z)\stackrel{\partial_k^*}{\longleftarrow}
\Hom(C_{k-1}(M,\partial_-M),\Z).\]
So the lower line also computes $H^*(M,\partial_-M)$.
\end{proof}

\begin{rem}
\label{rem:RP2}
In the non-orientable situation, $\partial_{n-k+1}'$ is not, in general,
the transpose of~$\partial_k$ (even up to sign). For instance, in the
standard handle decomposition of the real projective plane
$\RP^2$ with a single $0$, $1$- and $2$-handle each,
$\partial_2,\partial_2'\co\Z\rightarrow\Z$ are multiplication by~$2$,
while $\partial_1$ and $\partial_1'$ are the zero homomorphism, so no new
information is gained. The reason is that the attaching sphere of the
$1$-handle $h_1$ counts as two points of opposite sign, while
the coorientations of the two points making up the belt sphere of $h_1$
both correspond to the same orientation of the attaching circle $A_2$
of the $2$-handle; see also the next section.
When turning this handle decomposition upside down,
this whole picture becomes reversed.
\end{rem}
\section{Orientability and top homology}
Let $M$ be a connected, closed $n$-dimensional manifold. We claim that
the top-dimensional integral handle homology of $M$ is given by
\[ H_n(M)\cong\begin{cases}
\Z & \text{if $M$ is orientable},\\
0  & \text{otherwise}.
\end{cases}\]

By handle cancellation we may assume that $M$ contains a single
$0$-handle~$h_0$. Each $1$-handle $h_1$ is attached by an embedding
$\partial_-h_1\rightarrow\partial h_0$. If each of these
embeddings is orientation preserving for the
boundary orientation of $\partial_-h_1\subset h_1$
(and a suitable orientation of~$h_1$), the manifold $M$
is orientable. If there is at least one embedding
$\partial_-h_1\rightarrow\partial h_0$ where the embeddings
of the two components of $\partial_-h_1$ have opposite
orientation behaviour, $M$ is not orientable.

We now compute the homology by turning this handle decomposition upside
down as in Section~\ref{section:Poincare}. The crucial
observation, as in Remark~\ref{rem:RP2}, is that $\partial_-h_1=
\{\pm 1\}\times D^{n-1}$, where the two $(n-1)$-discs inherit
\emph{opposite} orientations, while in the interpretation of $h_1$ as an
$(n-1)$-handle in the reverse handle decomposition,
\[ h_1=D^1\times D^{n-1}=h_{n-1}',\]
the belt sphere $B_{n-1}'=(\pm 1,0)$ consists of two points with
the \emph{same} coorientation. Thus, if $\partial_-h_1\rightarrow
\partial h_0$ is orientation preserving, we have $A_n'\bullet B_{n-1}'=0$;
if $\partial_-h_1\rightarrow \partial h_0$ maps the two components
with opposite orientation behaviour (with respect to the boundary orientation),
we have $A_n'\bullet B_{n-1}'=\pm 2$.

We conclude that $h_n'$ is a cycle generating the top-dimensional homology
in the orientable case, while $\partial_n'\neq 0$ in the
non-orientable case.
\begin{ack}
We thank the referee for very constructive comments. In particular,
the discussion of handle homology over the integers in the case
of non-orientable manifolds is based on a suggestion by the referee.
H.~G.\ is partially supported by the SFB/TRR 191
``Symplectic Structures in Geometry, Algebra and Dynamics'',
funded by the Deutsche Forschungsgemeinschaft.
M.~K.\ is supported by the Berlin Mathematical School.
\end{ack}

\end{document}